\def\math#1{$#1$}
\def\mand#1{$$#1$$}
\def\frac#1#2{{#1\over #2}}
\def\mld#1{\begin{equation}
#1
\end{equation}}
\def\eqan#1{\begin{eqnarray*}
#1
\end{eqnarray*}}
\DeclareSymbolFont{AMSb}{U}{msb}{m}{n}
\DeclareMathSymbol{\N}{\mathbin}{AMSb}{"4E}
\DeclareMathSymbol{\Z}{\mathbin}{AMSb}{"5A}
\DeclareMathSymbol{\R}{\mathbin}{AMSb}{"52}
\DeclareMathSymbol{\Q}{\mathbin}{AMSb}{"51}
\DeclareMathSymbol{\I}{\mathbin}{AMSb}{"49}
\DeclareMathSymbol{\C}{\mathbin}{AMSb}{"43}
\def\qed{\hfill\rule{2mm}{2mm}}
\def\floor#1{{\left\lfloor\,#1\,\right\rfloor}}
\def\r#1{{(\ref{#1})}}
\def\dotfil{\leaders\hbox to 1.5mm{.}\hfill}
\newcounter{rmnum}
\def\RN#1{\setcounter{rmnum}{#1}\uppercase\expandafter{\romannumeral\value{rmnum}}}
\def\rn#1{\setcounter{rmnum}{#1}\expandafter{\romannumeral\value{rmnum}}}
\newtheorem{theorem}{Theorem}
\newtheorem{lemma}{Lemma}
\newtheorem{corollary}{Corollary}
\newcommand{\cT}{{\cal T}}
\newcommand{\qedsymb}{\hfill{\rule{2mm}{2mm}}}
\providecommand\remove[2]{}
\newenvironment{proof}{\begin{trivlist}
\item[\hspace{\labelsep}{\bf\noindent Proof: }]}{\qedsymb\end{trivlist}}
\date{}
\title{\bf  Embedding a Forest in a Graph.}
\author
{Mark K. Goldberg  and Malik Magdon-Ismail\\
Department of Computer Science,\\
Rensselaer Polytechnic Institute \\
Troy, NY, 12180.\\
{goldberg@cs.rpi.edu}; \hspace*{0.1in} {magdon@cs.rpi.edu}\\
}
\begin{document}
\maketitle
\begin{abstract}
For \math{p\ge 1},
we prove that every forest with \math{p} trees whose sizes are 
$a_1, \ldots, a_p$ can be embedded in any graph containing
at least
$\sum_{i=1}^p (a_i + 1)$ vertices and having a minimum degree 
at least $\sum_{i=1}^p a_i$.
\end{abstract}

\section{Introduction.}
It is a folklore fact that 
every tree with $d\geq 0$ edges can be embedded 
in any  graph with minimum vertex degree $d$.  
Indeed, a linear algorithm to find such an embedding
would 
sequentially embed the vertices of the tree according to a 
depth first search ordering of the tree vertices.
It is likely, though, that the required bound on the minimum degree is
excessive, as captured by 
the famous conjecture by Erd\"{o}s and S\'{o}s 
(\cite{erdos}), which  states that every tree with $d$ edges can be embedded 
in any graph whose average degree is at least $d$.
A number of results (\cite{B-D, mclennan, sidor, w-l-l, S-W}) confirm 
the conjecture for some classes of trees and classes of graphs.  
The full conjecture is still neither proved, nor disproved.

A natural extension of the problem is to embed a forest 
in a graph.
If $F=\{T_1, \ldots, T_p\}$ is a forest of \math{p} trees whose sizes are
$a_1, \ldots, a_p$ respectively, then a necessary condition for embedding 
$F$ in a graph $G$ is that $|V(G)| \geq \sum_{i=1}^p (1 + a_i)$.
The straightforward tree embedding algorithm
outlined above may fail, 
even if the minimum
degree is at least $\sum_{i=1}^p a_i$.
However, we show that this condition on the minimum
degree (in addition to the obvious necessary condition)
is sufficient to guarantee that the forest can be embedded in 
the graph; we prove the following:
\begin{theorem}
\label{theoremforest:1}
Let $F = \{T_1, \ldots, T_p\}$  be a forest, and $d = \sum_{i=1}^p a_i$, 
where $a_i$ is the number  of edges in the tree $T_i$ $(i \in [1, p])$.
Then every graph $G$ with at least $d + p$ vertices and minimum degree
at least $d$ contains $F$ as a subgraph.
\end{theorem}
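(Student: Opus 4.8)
The plan is to induct on the total number of edges $d$. The base case $d=0$ is immediate, since the forest is then $p$ isolated vertices and any $G$ with at least $p$ vertices accommodates them. It is also convenient to set aside at the outset any isolated-vertex trees: they carry no degree requirement and can be dropped into spare free vertices after everything else is embedded, since $|V(G)|\ge d+p$ leaves enough room. So for the inductive step I assume $d\ge 1$ and that every tree has an edge; pick a non-root leaf $v$ of some tree, with parent $u$, and let $F'=F-v$, a forest with $p$ trees and $d-1$ edges. Because $|V(G)|\ge d+p\ge (d-1)+p$ and the minimum degree is at least $d\ge d-1$, the induction hypothesis embeds $F'$ by an injection $\phi$. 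Everything then reduces to one extension step: place $v$ at some vertex of $G$ that is unused by $\phi$ and adjacent to $x:=\phi(u)$.

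Write $R=\phi(V(F'))$ for the $d+p-1$ used vertices and $W=V(G)\setminus R$ for the free ones, so $|W|\ge 1$. If some $w\in W$ is adjacent to $x$ we set $\phi(v)=w$ and are done, so suppose the extension is blocked: every neighbour of $x$ lies in $R$. The key observation is that a free vertex cannot miss many used vertices. In the tight case $|V(G)|=d+p$ there is a single free vertex $w$, and since $\deg(w)\ge d$ while all neighbours of $w$ lie inside $R$, the vertex $w$ is non-adjacent to at most $(d+p-1)-d=p-1$ used vertices. Call this \emph{blocked set} $B\subseteq R$; then $|B|\le p-1$, and $x\in B$ since $w\not\sim x$.

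The idea is to repair the embedding by a relocation. Call a forest vertex $t$ \emph{movable} if every tree-neighbour $t'$ of $t$ has $\phi(t')\notin B$, equivalently $w\sim\phi(t')$; then $t$ may be re-embedded at $w$ without destroying any edge of $F'$ (its incident edges go to vertices adjacent to $w$, and $w$ was free), which frees the vertex $\phi(t)$. If we can find a movable $t$ with $\phi(t)\in N(x)$, then after moving $t$ to $w$ the vertex $\phi(t)$ is a free neighbour of $x$, and we place $v$ there; note a movable $t$ is automatically not a tree-neighbour of $u$, since $\phi(u)=x\in B$, so the edge $uv$ is unaffected. For $p=2$ this already finishes: here $B=\{x\}$, so the only non-movable candidates are the tree-neighbours of $u$, of which there are at most $\deg_{F'}(u)\le a_p-1\le d-1$, whereas $x$ has at least $d$ used neighbours; hence some used neighbour of $x$ is movable.

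The hard part, and the main obstacle, is to run this relocation for general $p$. When $B$ contains up to $p-1$ vertices, some of them images of high-degree forest vertices, it can happen that \emph{every} used neighbour of $x$ has a tree-neighbour inside $B$, so no single move to $w$ liberates a neighbour of $x$. I expect to handle this by iterating the relocation: moving a movable vertex into the current hole opens a new hole at its former position, and repeating walks the hole, past the at most $p-1$ obstructions in $B$, until it reaches $N(x)$. The bound $|B|\le p-1$ is what should keep the number of obstructions small enough for such a chain to terminate, and isolated-vertex images (being movable to any free vertex) are ideal intermediate steps; the case $|V(G)|>d+p$ only supplies extra free vertices and should be no harder. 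Proving that the chain provably reaches $N(x)$—equivalently, choosing the deleted leaf $v$ and the relocation route so the hole is guaranteed to arrive at a neighbour of $x$—is the step I anticipate will require the most care.
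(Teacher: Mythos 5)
Your approach---induction on the total number of edges $d$, deleting a leaf and repairing a blocked extension by relocating vertices into the free slot---is genuinely different from the paper's proof, which inducts on the number of trees $p$ and, assuming non-embeddability, first manufactures a clique of size $a_1+2$ and then runs a counting argument on how the trees of $F_{p-1}$ distribute themselves over a carefully chosen neighborhood. But your argument has a real gap, and it is exactly at the point you flag yourself: for general $p$ you never prove that the ``hole-walking'' chain of relocations reaches a neighbor of $x=\phi(u)$. This is not a routine detail to be filled in later; it is the entire difficulty of the theorem. What you have is an augmenting-path-style repair, and the obstruction is global: after each move the new free vertex has its own blocked set of up to $p-1$ used vertices, these sets change as the hole moves, the chain can revisit configurations, and nothing in your setup gives a monotone quantity forcing progress toward $N(x)$. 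The paper's introduction explicitly warns that the naive sequential embedding ``may fail, even if the minimum degree is at least $\sum_{i=1}^p a_i$,'' which is precisely the phenomenon your local repair must overcome; the paper escapes it not by a cleverer local move but by a structural detour (Lemmas \ref{lemmatree:2}--\ref{lemmatree:3} extract a large clique from the non-embeddability assumption, and Lemma \ref{lemmatree:5} plus the degree count on $S$ then yield the contradiction). You would need an analogous global invariant to make the chain argument work, and none is proposed.

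Two smaller points. First, your bound $|B|\le p-1$ is derived only in the tight case $|V(G)|=d+p$ with a single free vertex; when $|W|=k\ge 2$ a free vertex may spend up to $k-1$ of its degree on other free vertices, so its non-neighborhood inside $R$ can have up to $p+k-2$ elements, and the claim that the non-tight case ``should be no harder'' is unsupported---even your $p=2$ argument, which is correct in the tight case, does not go through verbatim there. Second, the $p=2$ computation should bound $\deg_{F'}(u)$ by $a_i-1\le a_1-1$ where $T_i$ is the tree containing the deleted leaf, not by $a_p-1$; this is harmless since all that is needed is $\le d-1$, but it signals that the bookkeeping for which tree loses the leaf matters once $p>2$.
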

Our proof can be converted to a quadratic algorithm for embeding a forest.

We consider simple undirected graphs without parallel edges and loops. 
The set of 
vertices adjacent to a vertex $x$, the neighborhood of $x$, is denoted $N(x)$.
An embedding $f: H \rightarrow G$ 
of a graph $H$ in a graph $G$ is a one-to-one mapping 
\math{f : V(H) \rightarrow V(G)} such that for any two distinct vertices 
$x,y \in V(H)$, if $xy \in E(H)$ then $f(x)f(y) \in E(G)$. 
For a graph $H$, the order of $H$ is the number of its vertices (denoted $|H|$)
and the size of $H$ is the number of its edges.
For the terms not defined in this paper see (\cite{We}).

\section{A Proof of the Theorem \ref{theoremforest:1}}

We prove the theorem by induction on \math{p}, the number of trees in the
 forest.
We can assume that every tree in a forest has at least two vertices,
so \math{a_i\ge 1}.

\paragraph*{The Base Case, \math{p=1}.}
The forest in this case consists of a single tree $T_1$ with $d$ edges.
We prove a slightly stronger statement, which implies the theorem for
\math{p=1}. 
\begin{lemma}
\label{lemmatree:1}
Given a connected subgraph $C$ of $T_1$ and an embedding 
$f: C \rightarrow G$, there is an embedding \math{g: T_1 \rightarrow G} 
whose restriction to \math{C} is precisely \math{f}.
\end{lemma}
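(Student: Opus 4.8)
The plan is to induct on the number of \emph{unembedded} vertices, $k = |V(T_1)| - |V(C)|$, extending the given embedding one vertex at a time until all of $T_1$ is covered. The base case $k = 0$ is immediate: here $C = T_1$, so we simply take $g = f$.

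For the inductive step, suppose $C$ is a proper connected subgraph of $T_1$ carrying the embedding $f$. Since $T_1$ is a tree (hence connected) and $C$ is a proper connected subtree, there must be a vertex $v \in V(T_1) \setminus V(C)$ that is adjacent in $T_1$ to some vertex $u \in V(C)$. The set $C' = C \cup \{v\}$, together with the edge $uv$, is again a connected subgraph of $T_1$, and it has exactly one fewer unembedded vertex. I would extend $f$ to an embedding $f'$ of $C'$ by leaving $f$ unchanged on $C$ and setting $f'(v)$ to be any vertex of $N(f(u))$ that does not already lie in the image $f(V(C))$. Provided such a free neighbor exists, $f'$ is a valid embedding, since the only new edge $uv$ maps to $f(u)f'(v) \in E(G)$; applying the induction hypothesis to $C'$ and $f'$ then produces the desired $g$.

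The crux is showing that a free neighbor of $f(u)$ always exists, and here I would argue by counting. By the minimum-degree hypothesis $f(u)$ has at least $d$ neighbors in $G$, while the occupied images that could block the choice are exactly those of $f(V(C)) \setminus \{f(u)\}$, of which there are $|V(C)| - 1$. Since $T_1$ has $d + 1$ vertices and $C$ is a \emph{proper} subtree, we have $|V(C)| \le d$, so at most $d - 1$ neighbors of $f(u)$ are occupied; hence the number of free neighbors is at least $d - (|V(C)| - 1) \ge d - (d - 1) = 1$.

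I expect this counting step to be the only real obstacle, and its success hinges on two points. First, the bound $|V(C)| \le d$ is precisely what the order hypothesis guarantees once $C$ is known to be proper, since $|V(T_1)| = d + 1$. Second, it is essential that $f(u)$ is excluded from its own neighborhood (no loops), which is what frees up the final unit of room; without this the count would only give $\ge 0$ rather than $\ge 1$, and the extension could fail.
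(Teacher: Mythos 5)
Your proof is correct and follows essentially the same route as the paper: grow the embedding one vertex at a time along a tree edge leaving $C$, using the minimum-degree bound $d$ against the at most $|V(C)|-1\le d-1$ occupied neighbors of $f(u)$ to find a free image for the new vertex. The only cosmetic difference is that you phrase the iteration as a formal induction on the number of unembedded vertices, and you make explicit the (correct) observation that loop-freeness is what yields the strict surplus of one free neighbor.
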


\begin{proof}
The idea is to arbitrarily grow the embedding \math{f} of 
\math{C} to an embedding \math{g} of \math{T_1}.
If \math{|C|<d+1}, let \math{uv \in E(T_1)} be an edge such that
\math{u\in V(C)} and \math{v\in V(T_1\setminus C)}.
Let \math{w = f(u)}. Since \math{C} has at most \math{d-1} vertices other 
than \math{u} and since the degree of \math{w} in \math{G} is at least
\math{d}, $G$ has an edge \math{wz} with vertex $z$ not in $g(C)$. Thus,
$f$ can be expanded to $g:C\cup\{v\} \rightarrow G$ by  defining 
$g (x) = g(x)$ for all $x \in C$, and $g(v) = z$. Iterating this expansion
completes the proof.
\end{proof}
\begin{corollary}
For any vertex $x$ of \math{T_1} and any vertex $y$ of $G$, an 
embedding $f: T_1 \rightarrow G$ exists for which $f(x) = y$. 
\end{corollary}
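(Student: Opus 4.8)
The plan is to derive this directly from Lemma~\ref{lemmatree:1} by taking the initial connected subgraph $C$ in the hypothesis of that lemma to be as small as possible, namely a single vertex. First I would let $C$ be the subgraph of $T_1$ whose vertex set is $\{x\}$ and whose edge set is empty. A single vertex with no edges is trivially connected, so $C$ is a legitimate connected subgraph of $T_1$ and hence a valid choice for the lemma.

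Next I would exhibit an embedding $f_0 : C \rightarrow G$ that places $x$ at $y$, by setting $f_0(x) = y$. I must verify that $f_0$ really is an embedding in the sense of the paper's definition: it is one-to-one because its domain is a single element, and the edge-preservation condition ``if $uv \in E(C)$ then $f_0(u)f_0(v) \in E(G)$'' holds vacuously because $C$ has no edges. Thus $f_0$ qualifies as an embedding of $C$ into $G$ with $f_0(x) = y$.

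Finally, I would invoke Lemma~\ref{lemmatree:1} with this choice of $C$ and $f_0$ to obtain an embedding $g : T_1 \rightarrow G$ whose restriction to $C$ is exactly $f_0$. In particular $g(x) = f_0(x) = y$, so setting $f = g$ produces the required embedding of $T_1$ with $f(x) = y$.

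Since every step is a direct specialization of the lemma, I do not expect a genuine obstacle here. The only point that warrants a moment's care is the observation that a single vertex counts as a connected subgraph and that the trivial map defined on it already satisfies the definition of an embedding; once this is granted, the lemma supplies the extension with no structure to preserve beyond the prescribed image of $x$.
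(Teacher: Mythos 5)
Your proof is correct and is exactly the argument the paper intends: the corollary is stated without proof precisely because it is the specialization of Lemma~\ref{lemmatree:1} to the single-vertex connected subgraph $C=\{x\}$ with $f_0(x)=y$. Your care in checking that a one-vertex subgraph is connected and that the trivial map is vacuously an embedding is appropriate but routine.
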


\paragraph*{The Induction Step, \math{p>1}.} 
Assuming the theorem holds for any forest $F_{p-1}$ 
with \math{p-1} trees, let \math{F_p} be a forest containing $p$
trees $T_1, \ldots, T_p$.  Denote $a_i$ the size of $T_i$ ($i \in [1,p]$).
Let $a_1 \geq a_2 \geq \ldots \geq a_p$, and let $a = a_1$.

\paragraph{Assumption.}
For the purpose of deriving a contradiction, we assume that 
\math{F_p} cannot be 
embedded in graph $G$ satisfying the conditions of the theorem.
\begin{lemma}
\label{lemmatree:2}
For every embedding \math{g: T_1\rightarrow G}, there is a vertex outside 
of \math{g(T_1)} which is adjacent to every vertex in \math{g(T_1)}.
\end{lemma}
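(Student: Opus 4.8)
The plan is to argue by contradiction within the lemma, leveraging the standing Assumption that $F_p$ does not embed in $G$ together with the induction hypothesis. Fix an arbitrary embedding $g : T_1 \rightarrow G$; since $T_1$ has $a$ edges it has $a+1$ vertices, so the image $g(T_1)$ is a set of exactly $a+1$ vertices of $G$. Suppose, toward a contradiction with the lemma's conclusion, that \emph{no} vertex outside $g(T_1)$ is adjacent to all of $g(T_1)$.

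The central bookkeeping step is to pass to the graph $G' = G \setminus g(T_1)$ and verify that it satisfies the hypotheses of the theorem for the smaller forest $F_{p-1} = \{T_2, \ldots, T_p\}$, whose total size is $d' = \sum_{i=2}^p a_i = d - a$. For the order, $|G'| \geq (d+p) - (a+1) = d' + (p-1)$, which is exactly the vertex count needed for $p-1$ trees. For the minimum degree, this is precisely where the contradiction hypothesis does its work: a vertex $v \in G'$ that fails to be adjacent to all of $g(T_1)$ is adjacent to at most $a$ of those $a+1$ vertices, so deleting $g(T_1)$ removes at most $a$ edges incident to $v$. Hence $\deg_{G'}(v) \geq d - a = d'$, and this holds for every vertex of $G'$.

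Having checked both conditions, I would invoke the induction hypothesis to embed $F_{p-1}$ into $G'$. Because $G'$ is vertex-disjoint from $g(T_1)$, this embedding combined with $g$ yields an embedding of the full forest $F_p = \{T_1\} \cup F_{p-1}$ into $G$, contradicting the Assumption. Therefore a vertex outside $g(T_1)$ adjacent to all of $g(T_1)$ must exist, which is the claim.

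The step I expect to be the crux is the degree accounting. Naively, deleting the $a+1$ vertices of $g(T_1)$ only guarantees $\deg_{G'}(v) \geq d - (a+1) = d' - 1$, which is one short of the bound required to apply the induction hypothesis. The whole force of the lemma is that this single-unit deficit is avoided exactly when no outside vertex sees all of $g(T_1)$: missing at least one vertex of the image upgrades the loss from $a+1$ to $a$ and recovers the precise threshold $d'$. Recognizing that the lemma's conclusion is exactly the obstruction that would otherwise block the induction is the main idea of the argument.
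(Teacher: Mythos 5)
Your proof is correct and follows essentially the same route as the paper: delete $g(T_1)$, use the assumed nonexistence of a vertex adjacent to all of $g(T_1)$ to show every remaining vertex loses at most $a$ neighbors (so the minimum degree stays at $d-a=\sum_{i=2}^p a_i$), count that at least $\sum_{i=2}^p(1+a_i)$ vertices remain, and invoke the induction hypothesis to embed $F_{p-1}$ and hence $F_p$, contradicting the Assumption. Your explicit remark that the hypothesis is exactly what rescues the degree count from the naive $d-(a+1)$ bound is precisely the point of the lemma.
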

\begin{proof}
If the statement were incorrect, then the removal of $g(T_1)$ from $G$ 
would leave a subgraph $G'$ with at least $d+p-(a+1)=\sum_{i=2}^p (1 + a_i) $ 
vertices each 
of degree at least $d-a\geq\sum_{i=2}^p a_i$. 
Inductively, $\{T_2, \ldots, T_p\}$ can be embedded in $G'$ 
which would yield an embedding of $F_p$ in $G$ contradicting
the assumption that $F_p$ cannot be embedded in $G$. 
\end{proof}
The main use of the previous lemma is to show that under 
our assumption, there is a large clique in \math{G}. 
\begin{lemma}
\label{lemmatree:3}
\math{G} contains a clique of size at least \math{a+2}.
\end{lemma}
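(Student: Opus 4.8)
The plan is to argue by contradiction from the maximality of a largest clique, using Lemmas~\ref{lemmatree:1} and~\ref{lemmatree:2} in tandem. Let $K$ be a maximum clique of $G$ and write $k = |K|$. Suppose, toward a contradiction with the claimed bound, that $k \le a+1$. The goal will then be to exhibit a clique of size $k+1$, which is impossible.

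The key observation is that although a generic embedding of $T_1$ need not have a clique as its image, we are free to \emph{choose} the embedding, and embedding a graph \emph{into} a complete graph costs nothing. Concretely, since $T_1$ is a tree on $a+1 \ge k$ vertices, it contains a connected subtree $C$ on exactly $k$ vertices (grow outward from any vertex, one edge at a time). Any bijection $f : V(C) \to K$ is then a valid embedding of $C$ into $G$: it is one-to-one by construction, and since every pair of vertices of $K$ is adjacent, every edge of $C$ is sent to an edge of $G$.

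Next I would invoke Lemma~\ref{lemmatree:1}. Because $C$ is a connected subgraph of $T_1$ and $f$ is an embedding of $C$, it extends to an embedding $g : T_1 \to G$ whose restriction to $C$ is precisely $f$; in particular $g(T_1) \supseteq g(C) = K$. Now Lemma~\ref{lemmatree:2}, which holds under the standing assumption that $F_p$ is not embeddable, supplies a vertex $z \notin g(T_1)$ adjacent to every vertex of $g(T_1)$, hence to every vertex of $K$. Since $z \notin g(T_1) \supseteq K$ and the vertices of $K$ are pairwise adjacent, $K \cup \{z\}$ is a clique of size $k+1$, contradicting the maximality of $K$. Therefore $k \ge a+2$.

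The main obstacle here is conceptual rather than computational: one must resist trying to coerce an \emph{arbitrary} embedding of $T_1$ into landing on a clique, and instead realize that the clique should be fixed first and the tree embedding fitted onto it. Once a maximum clique is chosen and a subtree of matching size is planted on it, Lemmas~\ref{lemmatree:1} and~\ref{lemmatree:2} do all the remaining work, and the only bookkeeping is checking the sizes: the hypothesis $k \le a+1$ is exactly what guarantees that the subtree $C$ of order $k$ fits inside $T_1$, and the fact that $z$ lies outside $g(T_1) \supseteq K$ is exactly what makes the clique genuinely grow.
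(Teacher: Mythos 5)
Your proof is correct and follows essentially the same route as the paper: take a maximum clique $K$ with $|K| \le a+1$, plant a connected subtree of $T_1$ of order $|K|$ onto $K$, extend via Lemma~\ref{lemmatree:1}, and use the vertex supplied by Lemma~\ref{lemmatree:2} to enlarge $K$, contradicting maximality. Your write-up is slightly more explicit about why the subtree of order $|K|$ exists and why the new vertex genuinely enlarges the clique, but the argument is the same.
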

\begin{proof}
Let $K$ be the largest clique in $G$ and let $|K| <a+2$. Select any connected 
subgraph  $C$ of $T_1$ of order $|C|= |K|$, and embed $C$ in $K$; this is
possible since $K$ is a clique. By Lemma \ref{lemmatree:1}, this embedding 
can be expanded to an embedding $f$ of $T_1$ in $G$, and by 
Lemma \ref{lemmatree:2} there is a vertex outside of $f(T_1)$ adjacent
to all vertices in $f(T_1)$. In particular, it is adjacent
to all vertices in $K$, contradicting $K$'s maximality. Thus, $|K| \geq a+2$. 
\end{proof}
It turns out that for the rest of the proof, we only need a clique of 
size \math{a}. 
\begin{lemma}
\label{lemmatree:4}
Any tree of order \math{a+1} can be embedded in any connected graph 
of order at least $a+1$ that contains a clique of order \math{a}.
\end{lemma}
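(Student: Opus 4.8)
The plan is to exploit the fact that a clique absorbs any tree up to its own size, combined with a single boundary edge supplied by connectivity. Write the tree as \math{T} (so \math{|T|=a+1} and \math{T} has \math{a} edges), let \math{K} be the given clique of order \math{a} inside the connected host graph \math{H}, and note that since \math{|H|\ge a+1>a=|K|} there is at least one vertex of \math{H} outside \math{K}.

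First I would locate the single ``external'' edge to be used. Because \math{H} is connected and both \math{K} and \math{V(H)\setminus K} are nonempty, some vertex \math{w\notin K} must be adjacent to a vertex \math{k\in K}; otherwise there would be no edges between \math{K} and \math{V(H)\setminus K}, making \math{H} disconnected. This vertex \math{w} is where the one tree vertex that cannot fit into \math{K} will be placed.

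Next I would peel off a leaf. Every tree on at least two vertices has a leaf \math{v}; let \math{u} be its unique neighbor, and set \math{T'=T\setminus\{v\}}, a tree on exactly \math{a} vertices. The key observation is that because \math{K} is complete, \emph{any} bijection \math{V(T')\to V(K)} is automatically an embedding: each edge of \math{T'} is carried to a pair of distinct clique vertices, which is always an edge of \math{K}. Hence I am free to choose such a bijection \math{f} subject to the single constraint \math{f(u)=k}.

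Finally I would extend \math{f} by setting \math{f(v)=w}. The only tree edge incident to \math{v} is \math{uv}, and \math{f(u)f(v)=kw\in E(H)} by the choice of \math{w}; moreover \math{w\notin K=f(T')}, so \math{f} stays injective. Thus \math{f} embeds all of \math{T} into \math{H}. The step carrying the real content is the leaf peeling combined with the alignment \math{f(u)=k}: the freedom to route \math{u} to the precise clique vertex touched by the external edge is exactly what lets the leftover leaf reach \math{w}. Everything else---the existence of \math{w} and the fact that arbitrary maps into a clique are embeddings---is routine, so I do not anticipate a genuine obstacle.
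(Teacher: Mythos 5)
Your proof is correct and follows essentially the same route as the paper's: use connectivity to find a vertex $w$ outside the clique adjacent to some $k\in K$, place a leaf of the tree at $w$, and embed the remaining $a$ tree vertices into the $a$-clique (where any bijection works), aligning the leaf's neighbor with $k$. You have merely spelled out the details that the paper's two-sentence proof leaves implicit.
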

\begin{proof}
Start by embedding a leaf at a vertex outside an \math{a}-clique, but adjacent
to a node in the clique (such a vertex must exist by connectivity). The 
remainder of the tree can be embedded in the clique.
\end{proof}

Let \math{K} be a clique of size $a$ in $G$. 
The subgraph $G' = G \setminus K$ contains at least $d-a+p$ 
vertices each of degree at least $d-a$. 
Inductively, $F_{p-1} = \{T_2, \ldots, T_p\}$ 
can be embedded in $G'$. Let $g: F_{p-1} \rightarrow G'$ be such an embedding. 
Select any vertex $x \in K$ and a subset $X \subseteq N(x) \setminus K$ with 
$|X| = d-a+1$ vertices. It is possible since $|N(x) \setminus K| \geq d-a+1$.
\begin{lemma}\label{lemma}
Every vertex in $X$ is  used  by any embedding \math{g} of $F_{p-1}$.
\end{lemma}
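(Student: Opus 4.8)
The plan is to argue by contradiction, using the $a$-clique $K$ together with an unused neighbor of $x$ to manufacture a spare copy of $T_1$. Suppose, contrary to the claim, that some vertex $v \in X$ is not used by the embedding $g$, that is, $v \notin g(F_{p-1})$. Since $X \subseteq N(x) \setminus K$, the vertex $v$ lies outside $K$ and is adjacent to $x \in K$. These two facts are exactly what I will exploit.

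Next I would examine the subgraph of $G$ induced by $K \cup \{v\}$. Because $v \notin K$, this subgraph has order exactly $a+1$; because $v$ is adjacent to $x \in K$ and $K$ is a clique, the subgraph is connected; and it contains $K$, a clique of order $a$. The tree $T_1$ has $a$ edges and hence order $a+1$, so Lemma \ref{lemmatree:4} applies directly and yields an embedding of $T_1$ into $K \cup \{v\}$.

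The crucial step, which I would verify carefully, is that this embedding of $T_1$ is vertex-disjoint from $g(F_{p-1})$. Indeed, $g$ embeds $F_{p-1}$ into $G' = G \setminus K$, so $g(F_{p-1})$ meets neither any vertex of $K$ nor (by the supposition) the vertex $v$; the embedding of $T_1$ lives entirely inside $K \cup \{v\}$. Hence the two embeddings use disjoint vertex sets, and their union is an embedding of the whole forest $F_p = \{T_1, T_2, \ldots, T_p\}$ into $G$. This contradicts the standing Assumption that $F_p$ cannot be embedded in $G$. Therefore no such $v$ exists, and every vertex of $X$ is used by $g$.

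I expect no genuine obstacle here: once the pieces are in place, everything follows immediately from Lemma \ref{lemmatree:4} and from the deliberate choice of $X \subseteq N(x) \setminus K$. The only subtlety worth stating explicitly is the disjointness argument, which rests entirely on the facts that $g$ is an embedding into $G \setminus K$ and that $v$ is assumed unused; I would make sure both are invoked when combining the two embeddings.
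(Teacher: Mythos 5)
Your proposal is correct and follows essentially the same route as the paper: an unused $v\in X$ together with the clique $K$ induces a connected subgraph of order $a+1$ containing an $a$-clique, so Lemma \ref{lemmatree:4} embeds $T_1$ there disjointly from $g(F_{p-1})$, contradicting the Assumption. You merely spell out the connectivity and disjointness checks that the paper leaves implicit.
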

\begin{proof}
Indeed, if $x\in X \setminus g(T_{p-1})$ is not used, then 
by Lemma \ref{lemmatree:4}, $T_1$ can be embedded in the subgraph $H$ 
induced by $K \cup\{x\}$, which would yield an embedding of $F_p$.
\end{proof}

Since all $d-a+1$ vertices of \math{X} are used in the embedding
$g:F_{p-1}\rightarrow G$, exactly \math{p-2} vertices outside of 
\math{K\cup X}, denoted \math{y_1,\ldots,y_{p-2}},  are 
used by $g$. The remaining $m+1$ vertices of the graph, outside of
$K \cup g(T_{p-1})$, are denoted \math{s_0, s_1,\ldots,s_m}.
We now split the set of the trees of the forest 
\math{F_{p-1}} into four subsets $\cT_1, \cT_2, \cT_3,$ and $\cT_4$.

\noindent\hspace*{0.2in}$\cT_1$: 
trees which are embedded entirely in \math{X}; 
\\
\noindent\hspace*{0.2in}$\cT_2$:
trees whose embedding has at least two vertices in \math{X} and 
at least one vertex in \math{Y}; 
\\
\noindent\hspace*{0.2in}$\cT_3$:
trees whose embedding has only one vertex in \math{X};  and
\\
\noindent\hspace*{0.2in}$\cT_4$: 
trees whose embedding is entirely in \math{Y}. 

\noindent
Let $q_i = |\cT_i|$ ($i =1,2,3,4$).
Since every tree in \math{F_{p-1}} belongs to one of these four subsets,
\mand{
q_1 + q_2 + q_3 + q_4 = p-1.
}
Denote by $a(T_i)$ the size of $T_i$.
For the embedding \math{g}:
every tree in $\cT_2$ uses at 
least one vertex in $Y$;
and, every tree \math{T} in $\cT_3$ (resp. $\cT_4$) uses 
\math{a(T)} (resp. \math{1+ a(T)}) vertices in \math{Y}. Since
there are \math{p-2} vertices in \math{Y},
\eqan{
q_2 + \sum_{T_i\in \cT_3} a(T_i)+ \sum_{T_i \in \cT_4}(a(T_i) + 1)&\le& p-2 = 
q_1 + q_2 +q_3 +q_4 -1.
}
This immediately gives a lower bound for \math{q_1}.
\begin{lemma}
\label{lemmatree:5}
\math{q_1\ge1+\sum_{T_i\in\cT_3}(a(T_i)-1)+\sum_{T_i\in\cT_4}a(T_i)\ge1+q_4}.
\end{lemma}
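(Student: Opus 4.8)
The statement is a purely arithmetic consequence of the vertex-counting inequality displayed immediately above it, so the plan is to chase that inequality through a short sequence of substitutions and cancellations, using only the relation $q_1+q_2+q_3+q_4=p-1$ and the normalization $a(T_i)\ge1$.

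First I would rewrite $p-2$ as $q_1+q_2+q_3+q_4-1$ and substitute this into the right-hand side of the displayed bound
$$q_2+\sum_{T_i\in\cT_3}a(T_i)+\sum_{T_i\in\cT_4}(a(T_i)+1)\le p-2.$$
Cancelling the common $q_2$ leaves $\sum_{T_i\in\cT_3}a(T_i)+\sum_{T_i\in\cT_4}(a(T_i)+1)\le q_1+q_3+q_4-1$. I would then unfold the cardinalities: writing $\sum_{T_i\in\cT_4}(a(T_i)+1)=\sum_{T_i\in\cT_4}a(T_i)+q_4$ lets me cancel $q_4$ from both sides, and absorbing the $q_3=|\cT_3|$ term into the $\cT_3$ sum turns $\sum_{T_i\in\cT_3}a(T_i)-q_3$ into $\sum_{T_i\in\cT_3}(a(T_i)-1)$. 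Solving the resulting inequality for $q_1$ gives exactly the first claimed bound.

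For the second inequality I would invoke the standing assumption that every tree has at least one edge, so that $a(T_i)\ge1$ for all $i$. This makes each summand $a(T_i)-1$ over $\cT_3$ nonnegative, allowing that sum to be discarded, and it gives $\sum_{T_i\in\cT_4}a(T_i)\ge|\cT_4|=q_4$; together these collapse the first bound to $q_1\ge1+q_4$.

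I do not expect a genuine obstacle: all the combinatorial content already sits in the counting argument that produced the displayed inequality, and what remains is bookkeeping. The only points requiring care are folding the cardinalities $q_3$ and $q_4$ correctly into their respective sums and not silently dropping a term during cancellation; the sole non-algebraic ingredient is the bound $a(T_i)\ge1$, which is precisely the normalization recorded at the very start of the proof.
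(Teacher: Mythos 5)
Your proposal is correct and follows exactly the route the paper intends: the paper states that the lemma follows ``immediately'' from the displayed counting inequality together with $q_1+q_2+q_3+q_4=p-1$, and your cancellation of $q_2$ and $q_4$, the folding of $q_3$ into $\sum_{T_i\in\cT_3}(a(T_i)-1)$, and the final use of $a(T_i)\ge 1$ are precisely the omitted bookkeeping, carried out correctly.
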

Let $s$ be an arbitrary vertex in $S$. Our goal now is to evaluate the degree
of $s$ in the subgraph induced on $S$, based on the assumption that \math{F_p}
cannot be embedded.  
We start with
\mld{
|N(s) \cap S| \geq d - |N(s) \cap K|  - |N(s) \cap (X \cup Y)|.
\label{eq:1}
}
We make the following observations about the 
neighborhood of \math{s} in $K \cup X \cup Y$.
\begin{enumerate}[1.]
\item \math{s} is not adjacent to any vertex in \math{K}, else  by 
Lemma \ref{lemmatree:4}, \math{T_1} could be embedded in \math{s\cup K}.
\item
\math{s} is not adjacent to at least one  vertex in $g(T)$ for any tree
$T \in \cT_2 \cup \cT_3$. Indeed, if \math{s} is adjacent to 
every vertex in \math{T}, a vertex 
$g(T_i)$ which is in $X$ can be swapped with $s$; this gives
an embedding of \math{F_{p-1}} that doesn't use every
vertex of \math{X}, contradicting
Lemma \ref{lemma}.
\item \math{s} is not adjacent to at least two vertices of 
$g(T)$ for any tree $T\in\cT_1$. 
Indeed, let $s$ be adjacent to all but one vertex in $g(T)$, and let
$y = g(x)$ be that exceptional vertex. Then for every neighbor $x'$ 
(in \math{T})
of $x$,
$s$ is adjacent to $g(x')$. By setting $g(x) = s$, we obtain a valid embedding
of \math{F_{p-1}} 
which doesn't use a vertex in $X$, contradicting Lemma \ref{lemma}.
\end{enumerate}
So, \math{N(s)\cap K=\emptyset} and 
\math{N(s)\cap(X\cup Y)\le |X\cup Y|-(2q_1 + q_2 +q_3)}.
Since \math{|X\cup Y|=d-a+p-1}, we have from Inequality \r{eq:1} that 
the number of neighbors of \math{s} in \math{S} is at least:
\begin{eqnarray*}
|N(s) \cap S|&\ge &d-(d-a +p-1) +2q_1 +q_2 +q_3\\
&=&a+q_1-q_4\\
&\ge&a+1,
\end{eqnarray*}
where we have used \math{q_1+q_2+q_3+q_4=p-1} and Lemma \ref{lemmatree:5}.
Thus, the degree of any vertex~$s$ in the subgraph induced by $S$
is at least $a+1$. By Lemma \ref{lemmatree:1}, $T_1$ can be embedded in this
subgraph,
contradicting the Assumption, and completing the proof of
Theorem \ref{theoremforest:1}. \qed

\section{Conjecture}
When the number of vertices equals the lower bound \math{p+d} 
and the minimum degree is at least \math{d}, then the Hajnal-Szemer\'{e}di
theorem on equitable coloring \cite{hajnal, kostoch}, applied to the complement
of the graph, guarantees the existence of \math{p} cliques each of size at 
least \math{\floor{d/p}}.
Thus, an arbitrary \math{p} graphs of order at most \math{\floor{d/p}}
can be \emph{simultaneously} embedded in the graph. 
When the number of vertices
increases, however, cliques are no-longer guaranteed. Our
result shows that one can simultaneously embed
trees, even as the number of vertices grows, as long as the sum of the
tree sizes is at most \math{d}.

Alternatively, one can ask whether a bound on the minimum degree is
excessive to guarantee the embedability of a forest. Indeed, 
we propose a natural extension to the conjecture 
by Erd\"{o}s and S\'{o}s:
\begin{quote}
Let $F = \{T_1, \ldots, T_p\}$  be a forest, and $d = \sum_{i=1}^p a_i$,
where $a_i$ is the number  of edges in the tree $T_i$ $(i \in [1, p])$.
Then every graph $G$ with at least $d + p$ vertices and the average degree
$\geq d$ contains a subgraph isomorphic to $F$.
\end{quote}
For a single star, the conjecture clearly holds; but, even the
extension to a collection of stars is not clear.

\bibliographystyle{abbrv}
\bibliography{pap}

\begin{thebibliography}{1}

\bibitem{B-D}
S.~Brandt and E.~Dobson.
\newblock The {E}rd{\"{o}}s-{S}{\'{o}}s conjecture for graphs of girth 5.
\newblock {\em Selected Papers in Honour of Paul Erd{\"{o}}s on the Occasion of
  his 80th Birthday (Keszthely, 1993), Discrete Mathematics}, (150):411--414,
  1996.

\bibitem{erdos}
P.~Erd{\"{o}}s.
\newblock Some problems in graph theory.
\newblock {\em in M. Fiedler, ed. Theory of Graphs and its Applications,
  Academic Press, New York, 1965}, pages 29--36, 1965.

\bibitem{hajnal}
A.~Hajnal and E.~Szemer{\'{e}}di.
\newblock Proof of a conjecture of {P}. {E}rd{\"{o}}s.
\newblock {\em in P. Erd{\"{o}}s, A. R{\'{e}}nyi, V.T. S{\'{o}}s, (eds.),
  Combinatorial theory and its applications, North Holland, London}, pages
  601--623, 1970.

\bibitem{kostoch}
H.~Kierstead and A.~V. Kostochka.
\newblock A short proof of the {H}ajnal-{S}zemer{\'{e}di} theorem on equitable
  colorings.
\newblock {\em Combin. Probab. Comput.}, (17):265--270, 2008.

\bibitem{mclennan}
A.~McLennan.
\newblock The {E}rd{\"{o}}s-{S}{\'{o}}s conjecture for trees of diamater four.
\newblock {\em Journal of Graph Theory}, 49(4):291--301, 2005.

\bibitem{S-W}
J.-F. Sacl{\'{e}} and M.~Wo{\'{z}}niak.
\newblock The {E}rd{\"{o}}s-{S}{\'{o}}s conjecture for graphs without ${C}_4$.
\newblock {\em J. Combin. Theory, Ser B}, 70:367--372, 1997.

\bibitem{sidor}
A.~Sidorenko.
\newblock Asymptotic solution for a new class of forbidden $r$-graphs.
\newblock {\em Combinatorica}, 9:207--215, 1989.

\bibitem{w-l-l}
M.~Wang, G.~Li, and A.~Liu.
\newblock A result of {E}rd{\"{o}}s-{S}{\'{o}}s conjecture.
\newblock {\em Ars Comb}, 55:123--127, 2000.

\bibitem{We}
D.~B. West.
\newblock Introduction to graph theory.
\newblock {\em Prentice Hall, Upper Saddle River, NJ}, 2003.

\end{thebibliography}
\end{document}